\newcommand{\cl}[2][1]{{\operatorname{cl}}_{#1}\!\left(#2\right)} 
\newcommand{\inr}[2][1]{{\operatorname{int}}_{#1}\!\left(#2\right)}
\newcommand{\bd}[2][1]{{\operatorname{bd}}_{#1}\!\left(#2\right)}
\newtheorem{theorem}{Theorem}[section]
\newtheorem{coro}[theorem]{Corollary}
\newtheorem{prop}[theorem]{Proposition}
\newtheorem{note}[theorem]{Note}
\newtheorem{question}[theorem]{Question}
\newtheorem{obs}[theorem]{Observation}
\newtheorem{exa}[theorem]{Example}
\journal{a journal.}
\begin{document}

\begin{frontmatter}

\title{Minima Nonblockers and Blocked Sets of a Continuum}

\author[rvt]{C. Piceno}
\ead{cesarpicman@gmail.com}
\author[rvt1]{H. Villanueva}
\ead{hugo.villanueva@udlap.mx}

\address[rvt]{Facultad de Ciencias Físico Matemáticas, Benemérita Universidad Autónoma de Puebla, Avenida San Claudio y 18 Sur, Colonia San Manuel, Ciudad Universitaria, C.P. 72570, Puebla, México}

\address[rvt1]{Departamento de Actuar\'ia, F\'isica y Matem\'aticas, Universidad de las Am\'ericas Puebla. Sta. Catarina M\'artir. San Andr\'es Cholula, Puebla. C.P. 72810. M\'exico}

\begin{abstract}

Given a continuum $X$ and an element $x \in X$, $\pi(x)$ is the smallest set that contains $x$ and does not block singletons, and $B(x)$ is the set of all elements blocked by ${x}$. We prove that for each $x \in X$, $B(x)$ is connected, $B(x) \subset \pi(x)$, and that if $B(x)$ is closed, then $B(x)=\pi(x)$. Among other results, we prove that if $X$ is a Kelley continuum and $\pi(x)$ is proper, then $B(x)=\pi(x)$. Finally, we prove that for a certain class of dendroids, the family of minima non-blockers coincides with the family of connected non-blockers.
\end{abstract}

\begin{keyword}

Continuum \sep hyperspace \sep order arc \sep nonblocker \sep homogeneous \sep minimum nonblocker \sep blocked set.
\MSC[2010] Primary 54B20\sep  54F15

\end{keyword}

\end{frontmatter}

\section{Introduction}
\label{introduction}
Given a metric continuum $X$, an element $A \in 2^X - \{X\}$ is said to be a set that does not block
singletons of $X$ provided that for every point $x \in X - A$, there exists an order arc
$\alpha: [0,1] \to C(X)$ such that $\alpha(0)=\{x\}$, $\alpha(1)=X$ and $\alpha(t) \cap A = \emptyset$
for all $t \in [0,1)$. If $A$ does not block
singletons of $X$, we say that $A$ is a \textit{nonblocker} of $X$. The set that consists of all the nonblockers of $X$ is denoted by
$\mathcal {NB}(\mathcal F_1(X))$.
Given two elements $A, B \in 2^X$, we say that $B$ blocks $A$ provided that for each continuous path $\alpha : [0, 1]\to 2^X$ such that $\alpha(0) = A$ and $\alpha(1) = X$, there exists $t < 1$ such that $\alpha(t) \cap B 	\neq \emptyset$. 
 Given an element $a \in X$, the blocked set by $a$ is the set of all points of $X$ blocked by $\{a\}$ and it is denoted by $B(a)$.

In \cite{blockers} the concept of blockers is introduced, later, in \cite{camargo2}, \cite{camargo} and  \cite{nonblockers} general properties of the hyperspace of nonblockers were studied, in particular when it is a continuum.
In \cite{non-cut} the relationship between nonblockers and non-cut set hyperspaces is studied. 
Finally, in \cite{minimalnon} it is introduced the concept of minimun nonblocker set and in \cite{macias2}  the decomposition generated by the minima nonblocker sets is compared with other decompositions.


In Section \ref{aboutnonb},
we show some relations between minima nonblockers and 
the blocked sets. In particular, we show that in Kelley continua spaces, the blocked set and the minimum nonblocker set coincides when the blocked set is not dense. 
In Section \ref{Minima nonblockers in dendroids}, we characterize connected nonblockers for some classes of dendroids.

\section{Definitions and notation}
\label{definitions}
Given a subset $A$ of a metric space $Z$ with metric $d$, the closure, the boundary and the interior
of $A$ are denoted by $\cl[Z]{A}$, $\bd[Z]{A}$ and $\inr[Z]{A}$, respectively. Also,
$\mathcal V_r(A)$ denotes the open ball of radius $r$ about $A$.
A \textit{map} is a continuous function. A \textit{continuum} is a nonempty compact connected metric space.
A continuum is \textit{ homogeneous } if for any two points $p$ and $q$ of $X$, there exists
a homeomorphism $h$ from $X$ to itself such that $h(p)=q$.

Given a continuum $X$, we consider the following \textit{hyperspaces} of $X$:

$$2^X= \{A \subset X : A \text{ is nonempty and closed} \},$$
$$\mathcal C(X)= \{A \in 2^X : A \text{ is connected}\}$$
$$\mathcal F_1(X)= \{\{x\} : x \in X\},$$
These spaces are topologized with the Hausdorff metric defined as follows:
$$\mathcal H(A,B)= \text{ inf } \{\epsilon > 0 : A \subset \mathcal V_\epsilon (B)
\text{ and }B \subset \mathcal V_\epsilon (A)\}.$$

Clearly $\mathcal F_1(X) \subset \mathcal C(X) \subset 2^X$ and
$\mathcal F_1(X) \approx X$. It is known that if $X$
is a continuum then $2^X$, $\mathcal C(X)$ (Theorem 1.13 of \cite{nadlergringo})
are arcwise connected continua.



Let $X$ be a continuum. We say that $X$ has the \textit{Kelley's property}, provided that for each $\epsilon > 0$, there is a $\delta >0$ satisfying the following condition: if   $p,q \in X$ such that $d(p,q)< \delta$ and if $A \in \mathcal C(X)$ such that $p \in A$, then there exists a $B \in \mathcal C(X)$ such that $q \in B$ and $\mathcal{H}(A,B) < \epsilon$. It is known that every homogeneous continuum has the Kelley's property.

A \textit{path} in $2^X$ from $A$ to $B$ is a map $\gamma: [0,1] \to 2^X$ such that
$\gamma (0)=A$ and $\gamma(1)=B$.
An \textit{order arc} in $2^X$ is a map $\alpha: [0,1] \to 2^X$ such that if
$0 \leq s < t \leq 1$, then $\alpha(s) \subset \alpha(t)$ and $\alpha(s) \neq \alpha(t)$, we say that $\alpha$ is an order arc from $\alpha(0)$ to $\alpha(1)$.
By Lemma 1.11 of \cite{nadlergringo}, if $\alpha(0) \in \mathcal F_1(X)$, then $\alpha([0,1]) \subset \mathcal C(X)$.


Given a continuum $X$, an element $A \in 2^X - \{X\}$ is said to be a \textit{shore set} of $X$ if for each $\epsilon > 0$ there exists $B \in \mathcal C(X)$
 such that $\mathcal H(B,X)< \epsilon$ and $B \cap A = \emptyset$

According to Definition 0.1 of \cite{blockers}, we say that given two elements $A, B \in 2^X$,
$B$ \textit{does not block} $A$ if there exists a path $\gamma:[0,1] \to 2^X$ such that $\gamma(0)=A$, $\gamma(1)=X$
and $\gamma(t) \cap B = \emptyset$ for $t<1$. By Proposition 1.3 of \cite{blockers} and Theorem 1.8 of \cite{nadlergringo}, it is equivalent
to say that given two elements $A, B \in C(X)$, $B$ does not block $A$ if there exists an order arc
$\alpha:[0,1] \to C(X)$ such that $\alpha(0)=A$, $\alpha(1)=X$ and $\alpha(t) \cap B = \emptyset$ for $t<1$. 
We say that $B$ blocks $A$ if it is not true that $B$ does not block $A$.

We consider the following subspaces of $2^X$:
\begin{enumerate}[I.]
\item $\mathcal {NB}(\mathcal F_1(X))= \{A \in 2^X: A \text{ does not block the singletons of }X\}$;
\item $\mathcal S(X)= \{A \in 2^X: A \text{ is a shore set of }X\}$;
\item $\mathcal S_1(X)= \mathcal S(X) \cap \mathcal F_1(X)$; and 
\item $\mathcal {CNB(F}_1(X))=\mathcal {NB}(\mathcal F_1(X)) \cap \mathcal C(X).$

\end{enumerate}

It is clear that 
$\mathcal {NB}(\mathcal F_1(X)) \subset \mathcal S(X)$
and in Remark 3.3 of \cite{non-cut} the authors show that the inclusion can be proper.

Let $X$ be a continuum and $a$ an element of $X$, we define the family of nonblocker sets containing $a$ as $\mathcal{NB}(a,\mathcal{F}_1(X))=\{ A \in \mathcal {NB}(\mathcal F_1(X)): a \in A\}$ We define the minimum nonblocker function $\pi: X \to \mathcal{C} (X)$ as follows:
\[
\pi (a) =\left\{\begin{array}{ll}
\cap \mathcal{NB}(a,\mathcal{F}_1(X)), & \text{ if } \mathcal{NB}(a,\mathcal{F}_1(X))\neq\emptyset \\
X,     &  \text{ otherwise;}
\end{array}
\right.
\]
and let $$\Lambda_X=\{\pi (a): a \in X \}.$$

By Proposition 3.1 and Corollary 3.4 of \cite{minimalnon}, for each continuum $X$, $\Lambda_X \subset \mathcal {CNB(F}_1(X))$. Hence, for each $a\in X$, $\pi(a)$ is the minimum nonblocker set containing $a$. Thus, $\pi$ is a well-defined function. Recall that, for any continuum $X$ and $a\in X$, we define $$B(a)=\{ x : \{a\} \text{ blocks } x\}.$$
Notice that if $y \notin \pi(a)$, then $a$ does not block $y$. Hence, we have the following
\begin{obs}\label{bcpi}
Let $X$ be a continuum. Then $B(x) \subset \pi(x)$ for all $x \in X$.
\end{obs}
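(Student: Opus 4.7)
The plan is to prove the contrapositive: fix $x\in X$ and show that if $y\notin\pi(x)$, then $\{x\}$ does not block $\{y\}$, so that $y\notin B(x)$. The key input is the fact, cited just above the observation from Proposition 3.1 and Corollary 3.4 of \cite{minimalnon}, that $\pi(x)\in\Lambda_X\subset\mathcal{CNB}(\mathcal{F}_1(X))$; in particular $\pi(x)$ is itself a nonblocker of the singletons of $X$ which contains the point $x$.

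Given $y\notin\pi(x)$, I would apply the definition of a nonblocker to $\pi(x)$ at the point $y$: there exists an order arc $\alpha:[0,1]\to\mathcal{C}(X)$ with $\alpha(0)=\{y\}$, $\alpha(1)=X$, and $\alpha(t)\cap\pi(x)=\emptyset$ for every $t\in[0,1)$. Since $x\in\pi(x)$, monotonicity of the inclusions forces $\alpha(t)\cap\{x\}=\emptyset$ for every $t<1$ as well. Hence the same order arc witnesses that $\{x\}$ does not block $\{y\}$, which is exactly the statement $y\notin B(x)$.

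There is no real obstacle here beyond carefully invoking the correct characterization of nonblocking via order arcs (the equivalence between general paths in $2^X$ and order arcs in $\mathcal{C}(X)$ when the starting set is a singleton, which the excerpt has already recorded from Proposition 1.3 of \cite{blockers} and Theorem 1.8 of \cite{nadlergringo}). The proof is essentially a two-line consequence of the definitions once the fact that $\pi(x)$ itself is a nonblocker is available, and indeed the authors flag this reduction in the sentence immediately preceding the observation.
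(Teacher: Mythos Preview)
Your proof is correct and matches the paper's own argument, which is the one-sentence remark immediately preceding the observation: if $y\notin\pi(x)$ then, since $\pi(x)\in\mathcal{NB}(\mathcal{F}_1(X))$ and $x\in\pi(x)$, the order arc witnessing that $\pi(x)$ does not block $\{y\}$ also witnesses that $\{x\}$ does not block $\{y\}$. You have simply spelled out the details of this reduction.
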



\section{Minima nonblockers and blocked sets}\label{aboutnonb}
In this section we study the blocked sets and their relation with minima nonblockers. We start with the following easy result.

\begin{prop}\label{Bcerrado}
Let $X$ be a continuum and $x\in X$ such that $B(x)$ is closed.
Then, $B(x)=\pi(x)$.
\end{prop}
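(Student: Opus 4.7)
The plan is to combine Observation \ref{bcpi} with the reverse inclusion $\pi(x)\subset B(x)$, which I would obtain by showing that, under the closedness hypothesis, $B(x)$ is itself a nonblocker containing $x$; minimality of $\pi(x)$ would then force $\pi(x)\subset B(x)$.

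First I would note that $x\in B(x)$: any path $\gamma:[0,1]\to 2^X$ with $\gamma(0)=\{x\}$ satisfies $\gamma(0)\cap\{x\}\neq\emptyset$, so $\{x\}$ blocks $x$. Combined with the hypothesis, $B(x)$ is a nonempty closed set. If $B(x)=X$, the equality $B(x)=\pi(x)$ is immediate from Observation \ref{bcpi}. Otherwise $B(x)\in 2^X-\{X\}$, and it remains to verify the nonblocker condition.

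For that, fix $y\in X\setminus B(x)$. Since $\{x\}$ does not block $y$, there is an order arc $\alpha:[0,1]\to\mathcal{C}(X)$ with $\alpha(0)=\{y\}$, $\alpha(1)=X$ and $x\notin\alpha(t)$ for every $t<1$. The claim is that $\alpha(t)\cap B(x)=\emptyset$ for all $t<1$. Assume otherwise and pick $t_0<1$ and $z\in\alpha(t_0)\cap B(x)$. Since $y\notin B(x)$ we have $t_0>0$, hence $\alpha(t_0)\supsetneq\alpha(0)=\{y\}$, so $\{z\}\subsetneq\alpha(t_0)$. By the standard existence of order arcs between comparable subcontinua, there is an order arc $\beta$ from $\{z\}$ to $\alpha(t_0)$ with image contained in $\alpha(t_0)$, which therefore avoids $x$ (because $t_0<1$ gives $x\notin\alpha(t_0)$). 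Concatenating $\beta$ with $\alpha|_{[t_0,1]}$ and reparametrizing yields an order arc from $\{z\}$ to $X$ avoiding $x$ at every parameter strictly less than $1$. This contradicts $z\in B(x)$, proving the claim and showing that $B(x)$ is a nonblocker containing $x$.

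The likely obstacle is the splicing step: one must check that the concatenation is again an order arc (strictness of the inclusion at the junction is automatic because $\alpha$ is strictly increasing and $\beta$ ends at $\alpha(t_0)$) and that it genuinely misses $x$ before reaching $X$, which rests on the free fact that $x\notin\alpha(t_0)$. The closedness hypothesis is used only to guarantee $B(x)\in 2^X$, which is required for the very phrase ``$B(x)$ is a nonblocker'' to be meaningful.
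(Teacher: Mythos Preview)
Your argument is correct and follows the same approach as the paper: show $B(x)\in\mathcal{NB}(\mathcal{F}_1(X))$ and invoke the minimality of $\pi(x)$ together with Observation~\ref{bcpi}. The only difference is that the paper dispatches the key step in one line (``It is clear that $B(x)$ does not block any element outside of $B(x)$''), whereas you spell out exactly why the order arc from $\{y\}$ avoiding $x$ must in fact avoid all of $B(x)$; your splicing argument is the honest justification of that sentence.
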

\begin{proof}
By Observation \ref{bcpi}, it suffices to show that $\pi(x) \subset B(x)$. It is clear that $B(x)$ does not block any element outside of $B(x)$. Since $B(x)$ is closed, we have that $x \in B(x) \in \mathcal{NB(F}_1(X))$, and therefore $\pi(x) \subset B(x)$.
\end{proof}

It is not always true that $B(x)$ is closed, as Example \ref{mainexa} shows; however, $B(x)$ is always connected.

\begin{theorem}
For every continuum $X$ and every $x\in X$, $B(x)$ is connected.
\end{theorem}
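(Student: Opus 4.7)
The plan is to fix an arbitrary $y\in B(x)$ with $y\neq x$ (the case $y=x$ is immediate, since $\{x\}$ blocks $\{x\}$) and exhibit a connected subset of $B(x)$ that contains both $y$ and $x$; once this is done, $B(x)$ is a union of connected sets all passing through the common point $x$, and is therefore connected.

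To build such a set, choose any order arc $\alpha:[0,1]\to\mathcal C(X)$ with $\alpha(0)=\{y\}$ and $\alpha(1)=X$. Since $y\in B(x)$, the set $T=\{t\in[0,1]:x\in\alpha(t)\}$ is a nonempty closed subset of $[0,1]$ that meets $[0,1)$, so $t_0:=\min T$ satisfies $0<t_0<1$ and $x\notin\alpha(t)$ for all $t<t_0$. The central claim is that $L:=\bigcup_{t<t_0}\alpha(t)\subseteq B(x)$. I would prove this by contradiction: if some $z\in L$ did not lie in $B(x)$, pick $t<t_0$ with $z\in\alpha(t)$ and an order arc $\beta:[0,1]\to\mathcal C(X)$ from $\{z\}$ to $X$ satisfying $x\notin\beta(s)$ for every $s<1$. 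Concatenating a reparametrization of $\alpha|_{[0,t]}$ on $[0,1/2]$ (which runs from $\{y\}$ to $\alpha(t)$) with the map $s\mapsto\alpha(t)\cup\beta(2s-1)$ on $[1/2,1]$ (which runs from $\alpha(t)$ to $X$ inside $\mathcal C(X)$, since $z\in\alpha(t)\cap\beta(s)$) produces a path $\gamma:[0,1]\to\mathcal C(X)$ from $\{y\}$ to $X$ with $x\notin\gamma(r)$ for every $r<1$, contradicting $y\in B(x)$.

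Once $L\subseteq B(x)$ is established, observe that $L$ is connected, being a nested union of continua all containing $y$; moreover, Hausdorff continuity of $\alpha$ yields $\overline{L}=\alpha(t_0)$, so $x\in\overline{L}$ and $L\cup\{x\}$ is connected (as any set sandwiched between a connected set and its closure). Since $x\in B(x)$, the set $L\cup\{x\}$ is a connected subset of $B(x)$ joining $y$ to $x$, completing the argument. I expect the main technical obstacle to be the concatenation step: one must verify that $s\mapsto\alpha(t)\cup\beta(s)$ is Hausdorff-continuous, takes values in $\mathcal C(X)$, and agrees with $\alpha(t)$ at $s=0$, and then confirm that $x$ is excluded from $\gamma(r)$ for every $r<1$. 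These are routine properties of the hyperspace $2^X$, but spelling them out cleanly is where the care is required.
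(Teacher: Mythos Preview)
Your proof is correct. The concatenation step is fine: the map $s\mapsto\alpha(t)\cup\beta(s)$ is Hausdorff-continuous because union is continuous on $2^X\times 2^X$; its values lie in $\mathcal C(X)$ because $z\in\alpha(t)\cap\beta(s)$ for every $s$; it agrees with $\alpha(t)$ at $s=0$ since $\beta(0)=\{z\}\subset\alpha(t)$; and $x$ stays out because $t<t_0$ and $s<1$. The identification $\overline{L}=\alpha(t_0)$ follows from Hausdorff continuity exactly as you indicate.

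Your route, however, is genuinely different from the paper's. The paper argues by contradiction: assuming $B(x)$ is disconnected, it fixes the component $D$ of $B(x)$ containing $x$ and another component $C$, then splits into two cases according to whether $\operatorname{cl}(C)\cup D$ is connected, in each case locating a point $y\notin B(x)$ close enough to $C$ that one can run an order arc from any $c\in C$ through $y$ and out to $X$ while avoiding $x$. Your argument is direct and avoids the case split entirely: you show that every $y\in B(x)$ is joined to $x$ inside $B(x)$ by the set $L\cup\{x\}$ built from a single order arc. The core lemma you prove along the way---that a subcontinuum meeting $B(x)$ but missing $x$ must lie in $B(x)$---is exactly Proposition~\ref{asubset}, which in the paper is stated \emph{after} the connectedness theorem; your approach shows that it (or the special case you need) suffices to give connectedness immediately. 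The payoff of your method is a shorter, case-free argument; the paper's version keeps the logic at the level of components but pays for it with the extra dichotomy.
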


\begin{proof}
Let $x\in X$ and suppose that $B(x)$ is not connected. Let $D$ be the component of $B(x)$ containing $x$ and let $C$ be a component of $B(x)$ such that $C\neq D$. If $\cl[X]{C} \cup D$ is connected, then there exists $y \in \cl[X]{C} - B(x)$. Therefore, there exists an order arc $\alpha_1: [0,1] \to C(X)$ such that $\alpha_1(0)=\{y\}$ and $x \notin \alpha_1(t)$ for all $t < 1$. Now, if $c \in C$, by Theorem 1.8 of \cite{nadlergringo}, there exists an order arc $\alpha_2: [0,1] \to C(X)$ such that $\alpha_2(0)=\{c\}$ and $\alpha_2(1)=\cl[X]{C}$. Hence,
\[
\alpha (t) =\left\{\begin{array}{ll}
\alpha_2(t), & \text{ if } t\in [0,1] \\
\alpha_1(t-1) \cup \cl[X]{C},     &  \text{ if } t \in [1,2]
\end{array}
\right.
\]
is a path from $c$ to $X$ avoiding $x$, a contradiction.
On the other hand, suppose that $\cl[X]{C} \cup D$ is not connected.
Then, there exists an open set $U$ such that $\cl [X]C \subset U$ and $U \cap D = \emptyset$.
Now, Theorem 1.8 of \cite{nadlergringo} implies there exists an order arc $\gamma: [0,1] \to C(X)$ such that $\gamma(0)=\cl[X]{C}$ and $\gamma(1)=X$. By continuity, we can take $t>0$ such that $\gamma(t)\subset U$. Thus, there exists $y \in \gamma(t)- \cl[X]{C} $ such that $y\in\gamma(t)- B(x)$. Since $x$ does not block $y$, we can construct a path as above and conclude that $x$ does not block $c$ for all $c \in C$. Again, a contradiction. 
\end{proof}

\begin{prop}\label{asubset}
Let $X$ be a continuum and $x \in X$. If $A \in \mathcal{C}(X)$ is such that $B(x) \cap A \neq \emptyset$, then $A \subset B(x)$ or $x \in A$.
\end{prop}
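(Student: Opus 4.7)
The plan is to argue by contrapositive: assume $x \notin A$ and show that $A \cap B(x) \neq \emptyset$ forces $A \subset B(x)$. Equivalently, under the assumption that $x \notin A$ and there exists some $y \in A \setminus B(x)$, I will show no point $z \in A$ can lie in $B(x)$. So suppose for contradiction that $z \in A \cap B(x)$ while $y \in A \setminus B(x)$ and $x \notin A$.

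The idea is to manufacture a path in $2^X$ from $\{z\}$ to $X$ that avoids $x$ until time $1$, contradicting $z \in B(x)$. I would exploit the escape path from $y$ (which exists because $y \notin B(x)$) and transfer it to $z$ via an order arc inside $A$. Concretely, since $y \notin B(x)$, fix an order arc $\alpha_1 \colon [0,1] \to \mathcal{C}(X)$ with $\alpha_1(0) = \{y\}$, $\alpha_1(1) = X$, and $x \notin \alpha_1(t)$ for all $t < 1$. Since $A$ is a continuum and $z \in A$, Theorem~1.8 of \cite{nadlergringo} applied inside $A$ yields an order arc $\alpha_2 \colon [0,1] \to \mathcal{C}(X)$ with $\alpha_2(0) = \{z\}$, $\alpha_2(1) = A$, and $\alpha_2(t) \subset A$ for all $t$.

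Then I would concatenate, defining
\[
\alpha(t) = \begin{cases} \alpha_2(2t), & t \in [0, 1/2], \\ A \cup \alpha_1(2t-1), & t \in [1/2, 1]. \end{cases}
\]
Continuity at $t = 1/2$ is immediate since $\alpha_2(1) = A$ and $A \cup \alpha_1(0) = A \cup \{y\} = A$ (as $y \in A$). Moreover $\alpha(0) = \{z\}$ and $\alpha(1) = A \cup X = X$. For $t \in [0, 1/2]$ we have $\alpha(t) \subset A$ and $x \notin A$, so $x \notin \alpha(t)$; for $t \in [1/2, 1)$ we have $x \notin A$ and $x \notin \alpha_1(2t-1)$ because $2t-1 < 1$, so again $x \notin \alpha(t)$. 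Thus $\alpha$ witnesses that $\{x\}$ does not block $z$, contradicting $z \in B(x)$.

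The only subtle step is recognizing that the internal order arc from $\{z\}$ to $A$ lives entirely inside $A$, which is precisely what lets us avoid $x$ during the first half of the concatenation; after that, the avoidance is inherited from $\alpha_1$. Everything else is a routine concatenation-of-paths argument.
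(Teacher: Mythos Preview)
Your proof is correct and follows essentially the same approach as the paper: assuming $x\notin A$ and $y\in A\setminus B(x)$, you take an order arc from $\{z\}$ to $A$ inside $A$, concatenate it with an order arc from $\{y\}$ to $X$ avoiding $x$, and obtain a path from $\{z\}$ to $X$ missing $x$, contradicting $z\in B(x)$. The paper's proof is the same argument, only more tersely stated; your version simply spells out the concatenation formula and the continuity check.
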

\begin{proof}
If $A \not\subset B(x)$ and $x \notin A$, take $y \in A - B(x)$. Then, for $z \in A \cap B(x)$, there exists an order arc from $z$ to $A$. Since $y \notin B(x)$, there exists an order arc from $y$ to $X$ avoiding $x$. With these two order arcs, we can construct a path from $z$ to $X$ avoiding $x$. A contradiction.
\end{proof}

The following immediate corollary shows that if $B(x)=\pi(x)$, for all $x \in X$ and $\Lambda_X$ is a decomposition of $X$, then each element of $\Lambda_X$ is a terminal subcontinuum of $X$.
\begin{coro}\label{terminal}
Let $X$ be a continuum such that for every $x \in X$, $B(x)=\pi(x)$ and $\Lambda_X$ is a decomposition of $X$.
If $A \in \mathcal C(X)$ and $A\cap\pi(x)\neq\emptyset$, then $A \subset \pi(x)$ or $\pi(x) \subset A$.
\end{coro}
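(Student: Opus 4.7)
The plan is to reduce the corollary to two applications of Proposition \ref{asubset}, using the hypothesis $B(x)=\pi(x)$ to translate between the blocked set and the minimum nonblocker. First I would apply Proposition \ref{asubset} directly with the given $x$: since $A\cap\pi(x)=A\cap B(x)\neq\emptyset$, the proposition gives that either $A\subset B(x)=\pi(x)$, which is already one of the two desired conclusions, or $x\in A$. So the only case left to treat is $x\in A$.

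To handle the case $x\in A$, the key observation is that the decomposition hypothesis makes $\pi$ constant on $\pi(x)$: for any $y\in \pi(x)$, both $\pi(y)$ and $\pi(x)$ are elements of $\Lambda_X$ containing $y$, so $\pi(y)=\pi(x)$. Combined with $B(y)=\pi(y)$, this gives $B(y)=\pi(x)$ for every $y\in\pi(x)$. This is the ingredient that lets us iterate Proposition \ref{asubset} at the other points of $\pi(x)$.

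Now I would fix an arbitrary $y\in\pi(x)$ and apply Proposition \ref{asubset} with $y$ in place of $x$. Since $x\in A$ and $x\in\pi(x)=B(y)$, we have $A\cap B(y)\neq\emptyset$, so the proposition yields $A\subset B(y)=\pi(x)$ or $y\in A$. If the first alternative occurs for even one $y$, we obtain $A\subset\pi(x)$ and are done; if it fails for every $y\in\pi(x)$, then every such $y$ lies in $A$, so $\pi(x)\subset A$. Either way, one of the two conclusions of the corollary holds.

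I do not expect a serious obstacle: the whole argument is essentially a bookkeeping exercise with Proposition \ref{asubset}. The only step that requires a moment of thought is the passage $\pi(y)=\pi(x)$ for $y\in\pi(x)$, which depends on $\Lambda_X$ being a decomposition rather than merely a cover. Once that is in place, the two-case analysis closes the proof immediately.
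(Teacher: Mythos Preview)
Your argument is correct and is exactly the natural unpacking of what the paper intends: the corollary is stated as an ``immediate'' consequence of Proposition~\ref{asubset} with no proof given, and your two applications of that proposition, together with the observation that the decomposition hypothesis forces $\pi(y)=\pi(x)$ for $y\in\pi(x)$, are precisely the steps one needs. There is nothing to add.
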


Observe that, if $W$ is the Warsaw circle (\cite[Example 1.6]{nadler}), then $\pi(x)=B(x)$ for all $x \in W$, but not every $\pi(x) \in \Lambda_W$ is terminal. In this case, $\Lambda_W$ is not a decomposition.

The following corollaries say that every $\pi(x)$ and $\pi(y)$ are comparable whenever either $B(x)\cap\pi(y)\neq\emptyset$ or $B(x)\cap\pi(y)\neq\emptyset$ or $B(x)\cap B(y)\neq\emptyset$.

\begin{coro}\label{contenciondepis}
Let $X$ be a continuum and $x, y \in X$ such that $B(x) \cap \pi(y) \neq \emptyset$. Then, $\pi(x) \subset \pi(y)$ or $\pi(y) \subset \pi(x)$.
\end{coro}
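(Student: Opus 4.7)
The plan is to invoke Proposition \ref{asubset} directly with $A=\pi(y)$. This is legitimate because $\pi(y)\in\mathcal{C}(X)$ (as noted in the excerpt, $\Lambda_X\subset\mathcal{CNB(F}_1(X))\subset\mathcal{C}(X)$), and the hypothesis $B(x)\cap\pi(y)\neq\emptyset$ gives exactly what Proposition \ref{asubset} requires. The conclusion of that proposition then produces a clean dichotomy: either $\pi(y)\subset B(x)$, or $x\in\pi(y)$.

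From the first alternative, I would chain with Observation \ref{bcpi}, which says $B(x)\subset\pi(x)$, to obtain $\pi(y)\subset\pi(x)$. For the second alternative, I would use the defining property of $\pi(x)$ as the minimum nonblocker set containing $x$: since $\pi(y)$ is a nonblocker (elements of $\Lambda_X$ lie in $\mathcal{CNB(F}_1(X))$) and now contains $x$, it belongs to $\mathcal{NB}(x,\mathcal{F}_1(X))$, hence $\pi(x)\subset\pi(y)$.

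There is no real obstacle here; the work has already been done in Observation \ref{bcpi} and Proposition \ref{asubset}, and this corollary is just the natural bookkeeping statement one extracts from them. The only thing to double-check while writing is that the minimality argument in case two is stated as a consequence of the definition of $\pi$, not of some unstated lemma, and that the two cases in Proposition \ref{asubset} are applied in the correct order (with $A=\pi(y)$, not $A=\pi(x)$), so that the role of $x$ as the ``blocker'' in Proposition \ref{asubset} matches the $x$ in the hypothesis $B(x)\cap\pi(y)\neq\emptyset$.
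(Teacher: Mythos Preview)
Your argument is correct and matches the paper's intended approach: the corollary is stated immediately after Proposition~\ref{asubset} with no separate proof, and the implicit derivation is exactly the dichotomy you spell out---apply Proposition~\ref{asubset} with $A=\pi(y)$, then use Observation~\ref{bcpi} in one case and the minimality of $\pi(x)$ in the other. The only cosmetic point is the edge case $\pi(y)=X$, where $\pi(y)$ is not literally a nonblocker, but then $\pi(x)\subset\pi(y)$ holds trivially.
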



\begin{coro}
Let $X$ be a continuum and $x, y \in X$ such that $B(x) \cap B(y) \neq \emptyset$. Then, $\pi(x) \subset \pi(y)$ or $\pi(y) \subset \pi(x)$.
\end{coro}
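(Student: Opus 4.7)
The plan is to derive this corollary directly from Corollary \ref{contenciondepis} by upgrading the hypothesis $B(x)\cap B(y)\neq\emptyset$ to $B(x)\cap\pi(y)\neq\emptyset$. The bridge is Observation \ref{bcpi}, which tells us that $B(y)\subset\pi(y)$ for every $y\in X$. So the corollary is essentially a one-line consequence of what has already been established.

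Concretely, I would pick any $z\in B(x)\cap B(y)$, observe that $z\in B(y)\subset\pi(y)$ by Observation \ref{bcpi}, and conclude that $z\in B(x)\cap\pi(y)$. Since this intersection is therefore nonempty, Corollary \ref{contenciondepis} applies directly and yields $\pi(x)\subset\pi(y)$ or $\pi(y)\subset\pi(x)$.

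There is no real obstacle: the work has already been done in Observation \ref{bcpi} (to know $B(y)\subset\pi(y)$) and in Corollary \ref{contenciondepis} (to get comparability from $B(x)\cap\pi(y)\neq\emptyset$). The only thing to check is that we are allowed to swap $B(y)$ for $\pi(y)$, and the observation provides exactly that. Symmetry in $x$ and $y$ is automatic from the symmetric form of the conclusion, so no second case need be considered.
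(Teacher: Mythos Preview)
Your proposal is correct and matches the paper's intended argument: the paper states this corollary without proof, immediately after Corollary \ref{contenciondepis}, precisely because it follows from that result together with Observation \ref{bcpi} in the one-line way you describe.
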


Example \ref{mainexa} shows that it is not always true that $\pi(x) \subset \pi(y)$ or $\pi(y) \subset \pi(x)$ whenever $\pi(x) \cap \pi(y) \neq \emptyset$.

It is natural to ask when $\pi(x) \cap \pi(y) \neq \emptyset$ implies that $\pi(x) \subset \pi(y)$ or $\pi(y) \subset \pi(x)$. Corollary \ref{contenciondepis} suggests the following question.
\begin{question}
For which continua $X$ it is true that $\pi(x)\cap\pi(y)\neq\emptyset$ implies $\pi(y)\cap B(x)\neq\emptyset$ or $\pi(x)\cap B(y)\neq\emptyset$?
\end{question}

Similar questions can be asked about blocked sets. Corollary \ref{comp2} gives a partial positive answer on comparability. First, we need the following result.

\begin{prop}\label{transitive}
Let $X$ be a continuum and $x, y, z \in X$ such that $x \in B(y)$ and $y \in B(z)$. Then, $x \in B(z)$.
\end{prop}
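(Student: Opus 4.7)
The plan is to argue by contradiction. Assume $x \notin B(z)$; then by definition (and using the equivalence from Proposition 1.3 of \cite{blockers} together with Theorem 1.8 of \cite{nadlergringo}) there exists an order arc $\alpha \colon [0,1] \to \mathcal{C}(X)$ with $\alpha(0) = \{x\}$, $\alpha(1) = X$, and $z \notin \alpha(t)$ for every $t<1$. Since $x \in B(y)$, this same arc $\alpha$, being a path from $\{x\}$ to $X$, must meet $\{y\}$ at some parameter; that is, there exists $t_0 \in [0,1)$ with $y \in \alpha(t_0)$.

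The key idea is now to splice together two pieces to produce a path from $\{y\}$ to $X$ that avoids $z$ on $[0,1)$, which will contradict $y \in B(z)$. Using Theorem 1.8 of \cite{nadlergringo} applied to the point $\{y\}$ and the subcontinuum $\alpha(t_0)$ (which contains $y$), I would take an order arc $\beta \colon [0,1] \to \mathcal{C}(X)$ from $\{y\}$ to $\alpha(t_0)$. Then define
\[
\gamma(t) = \begin{cases} \beta(2t), & t \in [0,\tfrac{1}{2}], \\ \alpha\bigl(t_0 + (2t-1)(1-t_0)\bigr), & t \in [\tfrac{1}{2},1]. \end{cases}
\]
Continuity is immediate since both pieces agree at $t = \tfrac{1}{2}$ with value $\alpha(t_0)$, and clearly $\gamma(0) = \{y\}$, $\gamma(1) = X$.

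It remains to verify that $z \notin \gamma(t)$ for every $t < 1$. For $t \in [0,\tfrac{1}{2}]$ we have $\gamma(t) = \beta(2t) \subset \beta(1) = \alpha(t_0)$, and by the choice of $\alpha$ and the fact that $t_0 < 1$, $z \notin \alpha(t_0)$. For $t \in [\tfrac{1}{2}, 1)$ the parameter $s := t_0 + (2t-1)(1-t_0)$ lies in $[t_0, 1)$, so $z \notin \alpha(s) = \gamma(t)$. Hence $\gamma$ witnesses that $\{z\}$ does not block $\{y\}$, contradicting $y \in B(z)$.

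I do not expect any real obstacle here; the argument is essentially a concatenation of an order arc with a tail of a given order arc, and both pieces avoid $z$ for transparent reasons (the first because it is contained in $\alpha(t_0)$, the second because it is a subarc of $\alpha$ on $[t_0,1)$). The only mild subtlety is remembering that blocking is defined using arbitrary paths in $2^X$, not just order arcs, so the spliced $\gamma$ is a legitimate witness even though it is not itself an order arc.
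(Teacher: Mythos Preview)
Your proof is correct and follows essentially the same approach as the paper's: both arguments pick up $y$ along an order arc starting at $\{x\}$ and then use the tail of that arc (together with an order arc from $\{y\}$ into it) to invoke $y\in B(z)$. The only differences are cosmetic: the paper argues directly (for an arbitrary order arc $\alpha$ from $\{x\}$ to $X$, it asserts that $z$ must appear at some $t<1$) and leaves the splicing implicit, whereas you phrase it as a contradiction and spell out the concatenated path $\gamma$ explicitly.
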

\begin{proof}
If $\alpha_1: [0,1] \to C(X)$ is an order arc, such that $\alpha(0)=\{x\}$ and $\alpha(1)=X$, then, there exists $t_0 \in(0,1)$ such that $y \in \alpha(t_0)$. Since $z$ blocks $y$, there exists $t \in (t_0, 1)$ such that $z \in \alpha(t)$. This implies that $x \in B(z)$.
\end{proof}

\begin{coro}\label{comp2}
Let $X$ be a continuum and $x, y \in X$ such that $B(x) \cap B(y) \neq \emptyset$. Then, $B(x) \subset B(y)$ or $B(y) \subset B(x)$ or $\pi(x)=\pi(y)$.
\end{coro}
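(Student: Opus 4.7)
The plan is to reduce, via Corollary~\ref{contenciondepis} combined with the inclusion $B(y)\subset\pi(y)$, to the case of strict inclusion $\pi(x)\subsetneq\pi(y)$, and then to deduce $B(x)\subset B(y)$ from a carefully chosen application of Proposition~\ref{asubset} to the continuum $\pi(x)$ (not to the naive choice $\pi(y)$).

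First, since $B(x)\cap\pi(y)\supset B(x)\cap B(y)\neq\emptyset$, Corollary~\ref{contenciondepis} gives $\pi(x)\subset\pi(y)$ or $\pi(y)\subset\pi(x)$; assume without loss of generality the former. If equality holds we are in the third disjunct of the conclusion, so suppose the inclusion is strict. The pivotal observation is that $y\notin\pi(x)$: $\pi(x)$ is a nonblocker, and if $y$ lay in it, the minimality defining $\pi(y)$ would force $\pi(y)\subset\pi(x)$, contradicting strictness.

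Now pick any $w\in B(x)\cap B(y)$. By Observation~\ref{bcpi}, $w\in\pi(x)$, so $B(y)\cap\pi(x)\neq\emptyset$. Since $\pi(x)\in\mathcal{C}(X)$, Proposition~\ref{asubset} applied to the point $y$ and the continuum $A=\pi(x)$ yields $\pi(x)\subset B(y)$ or $y\in\pi(x)$; the second alternative has just been ruled out, so $\pi(x)\subset B(y)$, and together with Observation~\ref{bcpi} this gives $B(x)\subset\pi(x)\subset B(y)$. The main obstacle is spotting the correct instance of Proposition~\ref{asubset}: the naive choice $A=\pi(y)$ is applicable but its ``external point'' alternative $x\in\pi(y)$ cannot be excluded and delivers nothing; the productive choice is instead $A=\pi(x)$ paired with the point $y$, because the alternative $y\in\pi(x)$ is precisely what the minimality of $\pi(y)$ forbids. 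Once this pairing of a minimality property with a proposition alternative is seen, the argument collapses to a single line.
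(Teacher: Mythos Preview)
Your argument is correct, and it is genuinely different from the paper's. The paper first dispatches the cases $x\in B(y)$ or $y\in B(x)$ via Proposition~\ref{transitive} (transitivity of blocking), and in the remaining case runs a direct order-arc argument: for $z\in B(x)\cap B(y)$ and any order arc $\alpha$ from $\{z\}$ to $X$, it shows that $x\in\alpha(t)$ forces $y\in\alpha(t)$, from which one extracts $x,y\in\cl[X]{B(x)\cap B(y)}\subset\pi(x)\cap\pi(y)$ and hence $\pi(x)=\pi(y)$ by minimality. You instead bootstrap from Corollary~\ref{contenciondepis} to get comparability of $\pi(x)$ and $\pi(y)$ right away, and then in the strict case apply Proposition~\ref{asubset} with the pair $(y,\pi(x))$; the alternative $y\in\pi(x)$ is killed by minimality of $\pi(y)$, so $\pi(x)\subset B(y)$ and a fortiori $B(x)\subset B(y)$. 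Your route is cleaner in that it avoids any fresh order-arc work and stays at the level of the lemmas already proved; it even yields the slightly sharper intermediate conclusion $\pi(x)\subset B(y)$ when $\pi(x)\subsetneq\pi(y)$. The paper's route, on the other hand, does not invoke Corollary~\ref{contenciondepis} and produces the extra piece of information that both $x$ and $y$ lie in $\cl[X]{B(x)\cap B(y)}$ in the third case.
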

\begin{proof}
By Proposition \ref{transitive}, if $x \in B(y)$ or $y \in B(x)$, then $B(x) \subset B(y)$ or $B(y) \subset B(x)$, respectively, and we are done. Suppose that $y \notin B(x)$, $x\notin B(y)$ and let $z \in B(x) \cap B(y)$. Let $\alpha$ be an order arc such that $\alpha(0)=\{z\}$, $\alpha(1)=X$. Then, there exists $t_0 < 1$ such that $x \in \alpha(t_0)$. If $y \notin \alpha(t_0)$, then $x$ must be in $B(y)$, otherwise, $z \notin B(y)$. Hence, for every $z \in B(x) \cap B(y)$ and every order arc $\alpha: [0,1] \to C(X)$ such that $\alpha(0)=\{z\}$ and $\alpha(1)= X$, if $t < 1$ is such that $x \in \alpha(t)$, then $y \in \alpha(t)$. This implies that $x \in \cl [X]{B(x)\cap B(y)}$ and $y \in \cl [X]{B(x)\cap B(y)}$. Therefore $\pi(x)=\pi(y)$.
\end{proof}

\begin{exa}\label{mainexa}
For each positive integer $n$, let $a_n=\left(0,\frac{1}{n}\right)$, $b_n=-a_n$, $A_n$ the line segment, in $\mathbb{R}^2$, joining $v=(-1,0)$ and $a_n$, and $B_n$ the line segment joining $w=(1,0)$ and $b_n$. Let $A_0=[-1,0]\times\{0\}$ and $B_0=[0,1]\times\{0\}$, $Z_1=\bigcup\limits_{n=0}^{\infty}A_n$, $Z_2=\bigcup\limits_{n=0}^{\infty}B_n$ and $Z=Z_1\cup Z_2$. Let $Y=R\cup Z$ be a compactification of a ray $R=h([0,\infty))$ with $Z$ as the remainder, where $h:[0,\infty)\to Y$ is an embedding. Let $\alpha$ and $\beta$ be arcs joining $h(0)$ with $p=\left(-\frac{1}{2},0\right)$ and $q=\left(\frac{1}{2},0\right)$ such that $\alpha\cap\beta=\{h(0)\}$. Let $X= Y \cup \alpha \cup \beta$ and $o=(0,0)$.
\end{exa}

\[
\begin{tikzpicture}[scale=2]
\draw[very thick] (-2,0)--(2,0);
\foreach \y in {2,1,0.5, 0.3}{
    \draw[very thick] (-2,0) -- (0,\y) node at (-2,0)[below=5 pt]{$v$};
}
\foreach \y in {-2,-1,-0.5, -0.3}{
    \draw[very thick] (2,0) -- (0,\y) node at (2,0)[below=3 pt]{$w$};
}
\draw plot [smooth, tension=0.6] coordinates {(-2.2,0) (0.2,2.3) (-0.5,1) (0.5,1.3) (-0.5,0.6) (0.3,0.6) (-0.2,0.35) 
(0.2,0.35) (0.2,0.2)
(2.2,0) (-0.2,-2.3) (0.5,-1) (-0.5,-1.3) (0.5,-0.6) (-0.3,-0.6) (0.2,-0.4) (0.2,-0.3) (-0.2,-0.4) (-0.2,-0.2) 
(-0.4,-0.1) (-0.6,-0.1) };
\draw plot [smooth, tension=0.4] coordinates {(-2.2,0) (-1,-0.5) (-0.3,-2.5) (2.4,0) (0.5,1.5) (0.5,2.7) (-2.5,0)};
\draw plot[smooth, tension=0.3] coordinates {(-2.5,0) (-0.3,-2.7) (2.7,0) (0.3,3) (-3,0)};
\draw[draw=white, double=black, very thick] (-3,0) to[out=225,in=270] (-1,0) node{\tiny \textbullet} node at (-1,0)[below=5 pt]{$p$};
\draw[draw=white, double=black, very thick] (1,0) to[out=90,in=180] (1.5,1.93) node{\tiny$\bullet$} node at (1.1,0)[above]{$q$};
\draw node at (0,0){\tiny \textbullet}
        node at (0,0)[above]{$\vdots$} node at (0,0.1)[below]{$\vdots$}
        node at (-0.75,-0.12){$\cdots$}
        node at (1,0){\tiny\textbullet}node at (0,-0.12)[right]{$o$};
\end{tikzpicture}
\]

In Example \ref{mainexa}, $\pi(p) \cap \pi(q) \neq \emptyset$ but $\pi(p) \not\subset \pi(q)$ and $\pi(q) \not\subset \pi(p)$. In this case, $B(p) \cap B(q)= \emptyset$. It is interesting to notice that $\pi(x)=B(x)$ for all $x \in (p,q)$.

\begin{exa}\label{secondexa}
Let $Y$ be the continuum described in Example \ref{mainexa} and $X$ be the quotient space of $Y$ obtained by identifying the points $h(0)$ and $o$.
\end{exa}

\[
\begin{tikzpicture}[scale=2]
\draw[very thick] (-2,0)--(2,0);
\foreach \y in {2,1,0.5, 0.3}{
    \draw[very thick] (-2,0) -- (0,\y) node at (-2,0)[below=5 pt]{$v$};
}
\foreach \y in {-2,-1,-0.5, -0.3}{
    \draw[very thick] (2,0) -- (0,\y) node at (2,0)[below=3 pt]{$w$};
}
\draw plot [smooth, tension=0.6] coordinates {(-2.2,0) (0.2,2.3) (-0.5,1) (0.5,1.3) (-0.5,0.6) (0.3,0.6) (-0.2,0.35) 
(0.2,0.35) (0.2,0.2)
(2.2,0) (-0.2,-2.3) (0.5,-1) (-0.5,-1.3) (0.5,-0.6) (-0.3,-0.6) (0.2,-0.4) (0.2,-0.3) (-0.2,-0.4) (-0.2,-0.2) 
(-0.4,-0.1) (-0.6,-0.1) };
\draw plot [smooth, tension=0.4] coordinates {(-2.2,0) (-1,-0.5) (-0.3,-2.5) (2.4,0) (0.5,1.5) (0.5,2.7) (-2.5,0)};
\draw plot[smooth, tension=0.3] coordinates {(-2.5,0) (-0.3,-2.7) (2.7,0) (0.3,3) (-3,0)};
\draw[draw=white, double=black, very thick] (-3,0) to[out=225,in=270] (0,0) node{\tiny \textbullet};
\draw node at (0,0){\tiny \textbullet}
        node at (0,0)[above]{$\vdots$} node at (0,0.1)[below]{$\vdots$}
        node at (-0.75,-0.12){$\cdots$}
        node at (0,-0.12)[right]{$o$}
        (-1,0) node{\tiny \textbullet}
        node at (-1,0)[below]{$p$}
        (1,0) node{\tiny \textbullet}
        node at (1,0)[above]{$q$};
        
\end{tikzpicture}
\]

In Example \ref{secondexa}, $\pi(z) = \pi(o)=B(o)$ for every $z \in Z$, but $B(z) \subsetneq \pi(z)$ for each $z\in Z - \{o\}$. Moreover, it is not difficult to see that $cl(B(z))\subsetneq \pi(z)$ for each $z\in vw-\{o\}$; thus, $B(z)$ is not dense in $\pi(z)$. Also, it is clear that $B(p) \cap B(q)= \emptyset$. 


It is natural to ask for which continua $X$ and which point $x\in X$, it is true that $\pi(x)=B(x)$. We prove that for Kelly continua $X$,  
the minimum nonblocker and the blocked set are equal for those points $x\in X$ such that $\text{cl}B(x)$ is not dense. 





\begin{theorem}\label{bequalpi}
Let $X$ be a Kelley continuum. 
Then $\pi (x)= B(x)$ for every $x \in X$ such that $\operatorname{cl}(B(x))\neq X$.
\end{theorem}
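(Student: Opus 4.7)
The plan is to reduce the theorem to showing that $B(x)$ is closed whenever $\operatorname{cl}(B(x)) \neq X$ in a Kelley continuum. Indeed, $B(x) \subset \pi(x)$ always holds by Observation \ref{bcpi}, and Proposition \ref{Bcerrado} yields $B(x) = \pi(x)$ once $B(x)$ is closed, so the Kelley hypothesis only has to feed into the closedness step.

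I would argue closedness by contradiction. Suppose $y_n \in B(x)$, $y_n \to y$, but $y \notin B(x)$, so there is an order arc $\alpha \colon [0,1] \to \mathcal{C}(X)$ from $\{y\}$ to $X$ with $x \notin \alpha(t)$ for $t < 1$. Using the hypothesis $\operatorname{cl}(B(x)) \neq X$, I pick a point $z \in X$ and an open neighborhood $U$ of $z$ disjoint from $\operatorname{cl}(B(x))$; in particular every point of $U$ fails to be blocked by $x$, and $y \notin U$. Because $\alpha(t) \to X$ in the Hausdorff metric and $U$ is open, I choose $t^* < 1$ with $A := \alpha(t^*)$ meeting $U$. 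This bridge continuum satisfies $y \in A$, $A \cap U \neq \emptyset$, $x \notin A$, and $\operatorname{diam}(A) \geq d(y, U) > 0$.

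The heart of the argument is now a single application of the Kelley property to the pair $y, y_n$ and the continuum $A$: for small $\epsilon > 0$ there is $\delta > 0$ such that once $d(y_n, y) < \delta$ we obtain $A_n \in \mathcal{C}(X)$ with $y_n \in A_n$ and $\mathcal{H}(A_n, A) < \epsilon$. Choosing $\epsilon$ smaller than $d(x, A)$, smaller than $\operatorname{diam}(A)/2$, and small enough that an $\epsilon$-ball about a fixed point of $A \cap U$ sits inside $U$, the resulting $A_n$ (i) misses $x$, (ii) is nondegenerate, and (iii) contains some $z_n' \in U$. Since $z_n' \notin \operatorname{cl}(B(x))$, there is an order arc $\beta_n \colon [0,1] \to \mathcal{C}(X)$ from $\{z_n'\}$ to $X$ with $x \notin \beta_n(t)$ for $t < 1$.

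The final step is a concatenation: an order arc in $\mathcal{C}(A_n)$ from $\{y_n\}$ to $A_n$ (available by Theorem 1.8 of \cite{nadlergringo} once $A_n$ is nondegenerate, and avoiding $x$ since it stays inside $A_n$), followed by $t \mapsto A_n \cup \beta_n(t)$, which is continuous, connected valued (via $z_n' \in A_n \cap \beta_n(t)$), avoids $x$ for $t < 1$, and equals $X$ at $t = 1$. The resulting path from $\{y_n\}$ to $X$ witnesses $y_n \notin B(x)$, the desired contradiction. The main obstacle I anticipate is that the Kelley property perturbs one continuum at a time and does not directly deform the entire order arc $\alpha$; my workaround is to first collapse $\alpha$ into the single bridge $A$ that already reaches the safe open set $U$, perturb $A$ through $y_n$ using Kelley only once, and then restart the ascent to $X$ from a perturbed endpoint that still lies in $U$ and is therefore unblocked by $x$.
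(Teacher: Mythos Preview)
Your argument is correct and follows essentially the same route as the paper's proof: both pick a limit point of $B(x)$ outside $B(x)$, use the unblocked order arc together with the hypothesis $\operatorname{cl}(B(x))\neq X$ to produce a bridge continuum through that point missing $x$ and escaping $\operatorname{cl}(B(x))$, and then apply the Kelley property once to slide this continuum over to a nearby point of $B(x)$. The only cosmetic difference is that the paper finishes by invoking Proposition~\ref{asubset} (your perturbed continuum $A_n$ meets $B(x)$ at $y_n$, misses $x$, yet is not contained in $B(x)$), whereas you unwind that proposition by hand via the concatenated path; either way one obtains the same contradiction.
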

\begin{proof}
If $B(x)$ is closed, then $\pi(x)=B(x)$ by Proposition \ref{Bcerrado}.
Suppose $\pi(x)-B(x)\neq\emptyset$  and choose $z\in (\operatorname{bd}(B(x))\cap \pi(x))-B(x)$.
If $\operatorname{cl}(B(x)) \neq X$, there exists $Z \in \mathcal C(X)$ such that $z \in Z$, $Z - \operatorname{cl}(B(x)) \neq \emptyset$ and $x \notin Z$. Let $\epsilon = min\{\{d(w, x): w \in Z\} \cup \{ d_H(Z, \operatorname{cl}(B(x)))\}\}$. Since $X$ is a Kelley continuum, there exists $y \in B(x)$ and $Y \in \mathcal C(X)$ such that $y \in Y$ and $d_H(Z,Y) < \frac{\epsilon}{3}$. Hence, $x \notin Y$ and $Y \not \subset B(X)$ which contradicts Proposition \ref{asubset}.
\end{proof}

If it could be proven that in Kelley continua there is no $x$ such that $B(x)$ is dense and different from $X$, equality would hold for all points in Theorem \ref{bequalpi}. Therefore, we pose the following question: 

\begin{question}
If $X$ is a Kelley continuum, is it true that $\operatorname{cl}(B(x)) \neq X$ or $B(x) = X$ for every $x \in X$?
\end{question}

\begin{coro}\label{bequalpihomo}
If $X$ is a homogeneous continuum such that $\mathcal {NB}(\mathcal F_1(X)) \neq
\emptyset$, then $\pi (x)= B(x)$ for every $x \in X$.
\end{coro}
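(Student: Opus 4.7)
The plan is to reduce the corollary to Theorem \ref{bequalpi} by verifying its two hypotheses: that $X$ has the Kelley property and that $\operatorname{cl}(B(x))\neq X$ for every $x\in X$. The first hypothesis is immediate from the fact, recorded in Section \ref{definitions}, that every homogeneous continuum is a Kelley continuum, so nothing needs to be done there.

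For the second hypothesis, the main step is to show $\pi(x)\neq X$ for every $x\in X$, since the chain $B(x)\subset\pi(x)$ of Observation \ref{bcpi} together with the fact that $\pi(x)\in\mathcal{C}(X)$ is closed immediately gives $\operatorname{cl}(B(x))\subset\pi(x)$, and hence $\operatorname{cl}(B(x))\neq X$ whenever $\pi(x)\neq X$. To see that $\pi(x)\neq X$ for every $x$, I use the assumption $\mathcal{NB}(\mathcal{F}_1(X))\neq\emptyset$ to pick some nonblocker $A$, choose any $a\in A$, and use homogeneity to produce a homeomorphism $h\colon X\to X$ with $h(a)=x$.

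The only thing to check is that $h(A)$ is again a nonblocker, i.e.\ that nonblocker-ness is a topological invariant preserved by self-homeomorphisms. This is a short verification: if $y\in X-h(A)$, then $h^{-1}(y)\in X-A$, so there is an order arc $\alpha\colon[0,1]\to\mathcal{C}(X)$ from $\{h^{-1}(y)\}$ to $X$ with $\alpha(t)\cap A=\emptyset$ for all $t<1$; applying $h$ pointwise yields an order arc from $\{y\}$ to $X$ avoiding $h(A)$. Since $h(A)\neq X$, this shows $h(A)\in\mathcal{NB}(a,\mathcal{F}_1(X))$-translate, namely $h(A)\in\mathcal{NB}(x,\mathcal{F}_1(X))$, so $\pi(x)\subset h(A)\subsetneq X$.

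With $\operatorname{cl}(B(x))\neq X$ verified for every $x\in X$, I invoke Theorem \ref{bequalpi} to conclude $\pi(x)=B(x)$ for every $x\in X$. I do not anticipate a real obstacle: the homogeneity-translation trick for nonblockers is the only substantive point, and it is standard.
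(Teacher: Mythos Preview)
Your proof is correct and follows essentially the same route as the paper's: pick a nonblocker $A$, translate it by a homeomorphism carrying some $a\in A$ to $x$, conclude $\pi(x)\subset h(A)\neq X$, and then invoke Theorem~\ref{bequalpi} together with the fact that homogeneous continua are Kelley. The only difference is that the paper cites \cite[Proposition 3.9]{minimalnon} for the invariance of nonblockers under homeomorphisms, whereas you verify it directly.
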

\begin{proof}
Let $a \in A \in \mathcal {NB}(\mathcal{F}_1(X))$ and $x \in X$. Let $h: X \to X$ be a homeomorphism such that $h(a)=x$. By Proposition 3.9 from \cite{minimalnon}, $h(A) \in \mathcal {NB}(\mathcal{F}_1(X))$. Hence, $\operatorname{cl}(B(x)) \subset \pi(x) \subset h(A) \neq X$. Since, $X$ is a homogeneous continuum, X is a Kelley continuum and therefore $\pi(x)= B(x)$.

\end{proof}

We notice that in the proof of Theorem 4.3 of \cite{minimalnon}, it is assumed that $\pi(x)=B(x)$ for each $x\in X$, and we have seen that this is not always the case. By Theorem \ref{bequalpi}, it can be given a complete proof of the mentioned theorem.




Since homogeneous continua are Kelley, the next corollary  follows from \cite[Theorem 4.2]{minimalnon} and Corollary 3.4.

\begin{coro}\label{componentesmin} 
If  $X$ is a homogeneous continuum such that $\mathcal {NB}(\mathcal{F}_1(X))\neq\emptyset$, then $\Lambda_X$ is a continuous decomposition into maximal terminal subcontinua $\pi(x)=B(x)$. The decomposition space $Y$   is a homogeneous continuum with    $\Lambda_Y\approx\mathcal{F}_1(Y)$.
\end{coro}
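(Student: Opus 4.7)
The plan is to piece together the statement from the results developed so far. First, since every homogeneous continuum is Kelley, Corollary \ref{bequalpihomo} guarantees $\pi(x)=B(x)$ for every $x\in X$ (using the hypothesis $\mathcal{NB}(\mathcal{F}_1(X))\neq\emptyset$). This equality, combined with Theorem 4.2 of \cite{minimalnon}, whose proof is now valid in light of Theorem \ref{bequalpi}, yields that $\Lambda_X$ is a continuous decomposition of $X$. Corollary \ref{terminal} then promotes every class to a terminal subcontinuum, and maximality follows because any strictly larger proper terminal subcontinuum $T\supsetneq\pi(x)$ would have to be comparable with every $\pi(y)$ it meets, and this forces $T$ to equal a union of classes or to sit inside a single class, both contradicting $T\supsetneq\pi(x)$ together with the decomposition property.

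Next, to transfer homogeneity to $Y=X/\Lambda_X$, I would invoke Proposition 3.9 of \cite{minimalnon}: every self-homeomorphism of $X$ carries $\mathcal{NB}(\mathcal{F}_1(X))$ to itself, so in particular permutes the elements of $\Lambda_X$ and descends to a self-homeomorphism of $Y$. Given $[x],[y]\in Y$, pick $h:X\to X$ with $h(x)=y$; its induced map $\widetilde{h}$ sends $[x]$ to $[y]$, which shows $Y$ is homogeneous.

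The delicate part, and the main obstacle I expect, is the identification $\Lambda_Y\approx \mathcal{F}_1(Y)$. It suffices to show $\pi_Y([x])=\{[x]\}$ for every $[x]\in Y$, i.e., that each singleton in $Y$ is a nonblocker. Given distinct $[x_0],[x_1]\in Y$, the set $\pi(x_0)\in\Lambda_X$ is a nonblocker of $X$, so there is an order arc $\alpha:[0,1]\to\mathcal{C}(X)$ from $\{x_1\}$ to $X$ with $\alpha(t)\cap\pi(x_0)=\emptyset$ for every $t<1$. I would then argue that composing with the quotient $q:X\to Y$ yields a path from $\{[x_1]\}$ to $Y$ in $\mathcal{C}(Y)$ that avoids $[x_0]$ until time $1$, which can be re-parameterized to an order arc by the analogue of Theorem 1.8 of \cite{nadlergringo}. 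The point requiring care is that the projected map remain well-defined and connected-valued, and that the avoidance condition descend cleanly. This is exactly where terminality of the classes is crucial: each $\alpha(t)$ either contains $\pi(y)$ entirely or misses it, so $q(\alpha(t))$ is a subcontinuum of $Y$ saturated under the decomposition, and disjointness from $\pi(x_0)$ in $X$ becomes disjointness from $[x_0]$ in $Y$.
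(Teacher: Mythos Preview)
Your approach is essentially the paper's: the paper simply remarks that homogeneous continua are Kelley and then invokes \cite[Theorem~4.2]{minimalnon} together with Corollary~\ref{terminal}, leaving the details you spell out (maximality, homogeneity of $Y$, and $\Lambda_Y\approx\mathcal{F}_1(Y)$) to that reference. One small correction: according to the paper it is Theorem~4.3 of \cite{minimalnon}, not~4.2, whose proof tacitly assumed $\pi(x)=B(x)$ and is now repaired via Theorem~\ref{bequalpi}; Theorem~4.2 is cited as stated.
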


The following is an example of a continuum $X$ such that $\Lambda_X$ is a decomposition, $\pi(x) = B(x)$ for every $x \in X$, but it is neither upper nor lower semicontinuous.

\begin{exa}
For each positive integer $n$, let $L_n=\{\frac{1}{n}\}\times[-1,1]$ and let $L_0=\{0\}\times[-1,1]$. Let $Y_n$ be a compactification of the real line, embedded in $[\frac{1}{n+1},\frac{1}{n}]\times[-1,1]$, such that $L_n$ and $L_{n+1}$ are the two components of the remainder of $Y_n$ and let 
$$Y=L_0\cup\left(\bigcup\limits_{n=1}^{\infty}Y_n\right).$$

Let $A$ and $B$ be arcs in the plane joining $(0,1)$ to $(1,1)$ and $(0,-1)$ to $(1,-1)$, respectively, such that $A\cap Y=\{(0,1),(1,1)\}$. Let $X=A\cup B\cup Y$. 

\[
\begin{tikzpicture}[scale=2]
\draw[very thick] (-2,-2)--(-2,2);
\foreach \x in {2,0,-1, -1.5}{
    \draw[very thick] (\x,-2) -- (\x,2);
}
\draw plot [smooth, tension=0.6] coordinates {(1.8,-0.3) (1.7,2) (1.5,-2) (1,2) (0.5,-2) (0.3,2) (0.2,-0.3) };
\draw plot [smooth, tension=0.6] coordinates {(-0.1,0.3) (-0.3,-2) (-0.5,2) (-0.7,-2) (-0.9,0.3) };
\draw plot [smooth, tension=0.6] coordinates {(-1.07,-0.3) (-1.15,2) (-1.25,-2) (-1.35,2) (-1.42,-0.3) };
\draw plot [smooth, tension=0.6] coordinates {(-2,2) (0,3) (2,2)};
\draw plot [smooth, tension=0.6] coordinates {(-2,-2) (0,-3) (2,-2)};
\draw node at (1.8,-0.3)[below]{$\vdots$};
\draw node at (0.2,-0.3)[below]{$\vdots$};
\draw node at (-0.9,0.3)[above]{$\vdots$};
\draw node at (-0.1,0.3)[above]{$\vdots$};
\draw node at (-1.42,-0.3)[below]{$\vdots$};
\draw node at (-1.07,-0.3)[below]{$\vdots$};
\draw node at (-2,0)[right]{$\cdots$};
\draw node at (1,0){$Y_1$};
\draw node at (-0.5,0){$Y_2$};
\draw node at (-2,-2)[left]{$L_0$};
\draw node at (2,-2)[right]{$L_1$};
\draw node at (0,-2)[below]{$L_2$};
\draw node at (-1,-2)[below]{$L_3$};
\end{tikzpicture}
\]

Note that
$$\Lambda_X=\{L_n:n\in\mathbb{N}\}\cup\{\{x\}:x\notin L_n\text{ for each }n\in\mathbb{N}\}.$$

That is, $\Lambda_X$ is a decomposition which is neither upper nor lower semi-continuous.
\end{exa}

\begin{question}
For which continua $X$, $\Lambda_X$ is a (upper-semi)continuous decomposition?
\end{question}

\section{Minima nonblockers in dendroids} \label{Minima nonblockers in dendroids}
As we know, all minima nonblockers of a continuum $X$ are connected. An interesting question is when the set of connected nonblockers coincides with the set of minima nonblockers. Corollary \ref{componentesmin} give us sufficient conditions to ensure that $\Lambda_X=\mathcal{NB(F}_1(X))$. It is clear that if either $X$ is a continuum such that $\mathcal S(X)$ is finite or $X$ is a finite graph, then $\Lambda_X = \mathcal {CNB(F}_1(X))$. In this section, we study the case when $X$ is a dendroid such that $X - \mathcal S_1 (X) \neq \emptyset$.

We say that a continuum $X$ is \textit{unicoherent} if $A \cap B$ is connected for each subcontinua $A$ and $B$ such that $A  \cup B = X$; and $X$ is said to be \textit{hereditarily unicoherent} if every subcontinuum of $X$ is unicoherent. 
A \textit{dendroid} is an arcwise connected hereditarily unicoherent continuum.
A point $p$ in a dendroid $D$ is called a \textit{center} for $D$ if there exist two points $b$ and $c$ in $D$, called \textit{basin points}, such that for every $\epsilon > 0$ there exists a continuum $C$ containing $p$ of diameter less than $\epsilon$, called a \textit{bottleneck}, and there are open sets $U$ and $V$, containing $b$ and $c$, respectively, called \textit{basins}, such that every arc from $U$ to $V$ intersects $C$. In this case, we also say that $p$ is a $bc$-\textit{center}.
A point $p$ in a dendroid $D$ is a \textit{strong center} if there exist open sets $U$ and $V$ such that every arc in $D$ from $U$ to $V$ contains $p$.

\begin{note}
If $X$ is a dendroid and $A\in 2^X$, then $A \in \mathcal {NB}(\mathcal F_1(X))$ if and only if $A$ has empty interior and each arc component of $X - A$ is dense in $X$.
\end{note}

\begin{theorem}\label{arcoconexo}
Let $X$ be a dendroid such that $X - \mathcal S_1(X) \neq \emptyset$ and $A\in\mathcal{C}(X)$, then $A \in \mathcal {CNB(F}_1(X))$ if and only if $A$ has empty interior and $X - A$ is arc-connected.
\end{theorem}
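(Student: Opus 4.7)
The plan is to handle the equivalence in two parts, with the substantive content in the forward direction. For $(\Leftarrow)$ the argument is immediate: given $A\in\mathcal{C}(X)$ with empty interior and $X-A$ arc-connected, the empty interior makes $X-A$ dense in $X$, while arc-connectedness means $X-A$ is the unique arc component of itself. The preceding Note then yields $A\in\mathcal{NB}(\mathcal{F}_1(X))$, and combined with $A\in\mathcal{C}(X)$ this gives $A\in\mathcal{CNB}(\mathcal{F}_1(X))$.

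For $(\Rightarrow)$, suppose $A\in\mathcal{CNB}(\mathcal{F}_1(X))$, so by the Note $A$ has empty interior, and what remains is to show $X-A$ is arc-connected. I would argue by contradiction: assume $X-A$ has two distinct arc components $C_1$ and $C_2$, and choose $x_i\in C_i$. This is where the hypothesis $X-\mathcal{S}_1(X)\neq\emptyset$ is crucial. Fix $p\in X-\mathcal{S}_1(X)$; by the negation of the shore condition there is $\epsilon_0>0$ such that every $B\in\mathcal{C}(X)$ with $\mathcal{H}(B,X)<\epsilon_0$ must contain $p$.

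Because $A$ is a nonblocker, for each $i=1,2$ I would pick an order arc $\alpha_i\colon[0,1]\to\mathcal{C}(X)$ with $\alpha_i(0)=\{x_i\}$, $\alpha_i(1)=X$, and $\alpha_i(t)\cap A=\emptyset$ for every $t<1$. The dendroid ingredient I would invoke is the standard fact that every subcontinuum of a dendroid is arcwise connected, a short consequence of hereditary unicoherence: the intersection of such a subcontinuum with the arc joining two of its points must be connected, hence the whole arc. Applied to $\alpha_i(t)\subseteq X-A$, which contains $x_i\in C_i$, this forces $\alpha_i(t)\subseteq C_i$ for every $t<1$. Now continuity of $\alpha_i$ provides $\mathcal{H}(\alpha_i(t),X)<\epsilon_0$ for $t$ close enough to $1$, so $p\in\alpha_i(t)\subseteq C_i$ for both $i=1$ and $i=2$, hence $p\in C_1\cap C_2=\emptyset$, a contradiction.

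The main obstacle I anticipate is locating the correct role of the hypothesis $X-\mathcal{S}_1(X)\neq\emptyset$: it supplies a single global point that every sufficiently large subcontinuum is forced to contain, and this is precisely what allows two order arcs starting in different arc components of $X-A$ to collide at a common point and deliver the contradiction. Without the non-shore hypothesis, nothing would prevent a nonblocker from having several dense arc components in its complement, so the assumption is essential rather than merely technical.
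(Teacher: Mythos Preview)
Your argument is correct in both directions. The forward implication, however, follows a genuinely different route from the paper's.

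The paper invokes a chain of external results (Minc's bottleneck theorem for dendroids, Nall's theorem on centers and shore points, and a result of Escobedo--Estrada--S\'anchez) to locate a \emph{center} $o\in X-A$ together with a bottleneck continuum $C\ni o$ disjoint from $A$ and basins $U,V$. The proof then shows that any order arc from a point of $X-A$ avoiding $A$ must eventually meet both $U-A$ and $V-A$, hence (by arcwise connectedness of subcontinua) must intersect $C$; since $C$ itself is an arcwise connected subcontinuum of $X-A$, all of $X-A$ lies in a single arc component.

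You bypass the center/bottleneck machinery entirely by reading the non-shore hypothesis directly: a point $p$ with $\{p\}\notin\mathcal S(X)$ is one that every sufficiently $\mathcal H$-close-to-$X$ subcontinuum must contain. Pushing two order arcs from points in distinct arc components of $X-A$ close to $X$ then forces both to contain $p$, an immediate contradiction. This is more elementary and self-contained---it uses only the definitions, the Note, and the standard fact that subcontinua of dendroids are arcwise connected---whereas the paper's argument, while leaning on deeper structural results, exhibits the geometric mechanism (a bottleneck through which every arc component of $X-A$ must pass) more explicitly.
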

\begin{proof}
It is clear that if $A \in \mathcal C(X)$ has empty interior and $X - A$ is arc-connected, then $A \in \mathcal {CNB}(\mathcal F_1(X))$. 

Now, let $A \in \mathcal{NB(F}_1(X))$.
By Theorem 3.6 of \cite{minc}, Theorem 1 of \cite{nall} and Theorem 3.2 of \cite{non-cut}, we have that there exists one center $o \in X - A$. Let $b$ and $c$ be two points of $X$ such that $o$ is a $bc$-center. Let $C \in C(X)$ a bottleneck such that $ o \in C$ and $C \cap A = \emptyset$. Let $U$ and $V$ be basins for $b$ and $c$ respectively. Since $\inr[X]{A}= \emptyset$, we have that $U - A \neq \emptyset$ and $V - A \neq \emptyset$. Hence, if $x \in X - A$ and $\alpha$ is an order arc $\alpha:[0,1] \to C(X)$ from $\{x\}$ to $X$ avoiding $A$, there exists $t \in [0, 1)$ such that $\alpha(t) \cap (U - A) \neq \emptyset$ and  $\alpha(t) \cap (V - A) \neq \emptyset$. Since $\alpha(t) \in C(X)$, $\alpha(t)$ is arc-connected and therefore $\alpha(t) \cap C\neq \emptyset$. Thus, $X - A$ is arc-connected.
\end{proof}


Since no strong center is a shore point, by Theorem \ref{arcoconexo} we have the next corollary.

\begin{coro}
Given a dendroid $X$ with a strong center and $A\in\mathcal{C}(X)$, then $A \in \mathcal {CNB}(\mathcal F_1(X))$ if and only if $A$ has with empty interior and $X - A$ is arc-connected.
\end{coro}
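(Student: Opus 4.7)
The plan is to reduce the statement to Theorem \ref{arcoconexo} by verifying that its hypothesis $X-\mathcal{S}_1(X)\neq\emptyset$ holds automatically when $X$ admits a strong center. Concretely, if $p$ is a strong center of $X$, I would show that $\{p\}\notin\mathcal{S}_1(X)$, so $p\in X-\mathcal{S}_1(X)$, and then invoke Theorem \ref{arcoconexo} directly.

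First I would verify the asserted fact that no strong center is a shore point. Let $p$ be a strong center with basins $U$ and $V$, so that every arc in $X$ from $U$ to $V$ passes through $p$. Suppose for contradiction that $\{p\}$ is a shore set. Then there exists a sequence of subcontinua $B_n\in\mathcal{C}(X)$ with $\mathcal{H}(B_n,X)\to 0$ and $p\notin B_n$ for every $n$. Choose $\epsilon>0$ small enough that any set $\epsilon$-dense in $X$ must meet both $U$ and $V$ (possible since $U$ and $V$ are nonempty open subsets of $X$). For $n$ large, $B_n$ is $\epsilon$-dense, so $B_n\cap U\neq\emptyset$ and $B_n\cap V\neq\emptyset$. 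Since $X$ is a dendroid and $B_n$ is a subcontinuum of $X$, $B_n$ is itself a dendroid, and in particular arcwise connected. Hence there is an arc in $B_n\subset X$ from a point of $U$ to a point of $V$; this arc does not contain $p$, contradicting the strong-center property.

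With this established, $p\in X-\mathcal{S}_1(X)$, so $X-\mathcal{S}_1(X)\neq\emptyset$ and Theorem \ref{arcoconexo} applies verbatim to any $A\in\mathcal{C}(X)$, yielding the desired equivalence: $A\in\mathcal{CNB}(\mathcal{F}_1(X))$ if and only if $A$ has empty interior and $X-A$ is arc-connected.

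The only delicate point is the selection of $\epsilon$ guaranteeing that a sufficiently dense subcontinuum meets both basins; this is routine since $U$ and $V$ are open and nonempty, so I do not expect any substantive obstacle. The remainder of the argument is a direct invocation of Theorem \ref{arcoconexo}.
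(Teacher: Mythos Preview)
Your proposal is correct and follows exactly the paper's approach: the paper derives the corollary from Theorem~\ref{arcoconexo} after simply asserting that ``no strong center is a shore point,'' and you supply a clean proof of that assertion before invoking the theorem. The only addition is your explicit verification (via a limiting subcontinuum meeting both basins) of the stated fact, which is straightforward and matches the paper's intent.
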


Moreover, by \cite[Theorem 3.11]{minc}, every planar dendroid contains a single point bottleneck and no point  bottleneck is a shore point. Hence, Theorem \ref{arcoconexo} implies the following.

\begin{coro}
If $X$ is a planar dendroid and $A\in \mathcal{C}(X)$, then, $A \in \mathcal {CNB(F}_1(X))$ if and only if $A$ has empty interior and $X - A$ is arc-connected.
\end{coro}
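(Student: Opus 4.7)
The plan is to reduce this corollary directly to Theorem \ref{arcoconexo}. The hypothesis of that theorem that needs to be checked is $X-\mathcal{S}_1(X)\neq\emptyset$, i.e.\ that the planar dendroid $X$ admits at least one point which is not a shore point; once this is available, Theorem \ref{arcoconexo} supplies both implications of the equivalence without any further argument.

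To verify the hypothesis I would invoke \cite[Theorem 3.11]{minc}, which guarantees that every planar dendroid $X$ contains a single point bottleneck; call it $p$. The second ingredient cited in the paragraph preceding the corollary is that no point-bottleneck of a dendroid can be a shore point, so $\{p\}\notin\mathcal{S}_1(X)$, which gives $p\in X-\mathcal{S}_1(X)$ and in particular $X-\mathcal{S}_1(X)\neq\emptyset$.

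With this in hand, Theorem \ref{arcoconexo} applies to $X$ and to any $A\in\mathcal{C}(X)$. The forward implication of that theorem says that if $A\in\mathcal{CNB}(\mathcal{F}_1(X))$ then $A$ has empty interior and $X-A$ is arc-connected; the reverse implication, which in the proof of Theorem \ref{arcoconexo} was noted to be clear, gives the converse. Combining the two yields exactly the stated equivalence for planar dendroids.

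The only potentially delicate point is the citation chain: I would want to state explicitly, when writing the proof, that the single point bottleneck produced by \cite[Theorem 3.11]{minc} is not a shore point (this is the same fact used implicitly in Theorem \ref{arcoconexo} to pick the center $o\in X-A$), so that the reader sees why the hypothesis $X-\mathcal{S}_1(X)\neq\emptyset$ is satisfied. Everything else is a direct appeal to the previous theorem.
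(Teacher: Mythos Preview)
Your proposal is correct and follows exactly the paper's own argument: invoke \cite[Theorem 3.11]{minc} to obtain a single-point bottleneck, observe that such a point cannot be a shore point so $X-\mathcal{S}_1(X)\neq\emptyset$, and then apply Theorem \ref{arcoconexo} directly. There is nothing to add or change.
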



\begin{theorem}
Let $X$ be a dendroid such that $X - \mathcal S_1(X) \neq \emptyset$.
Then, $\Lambda_X = \mathcal {CNB(F}_1(X))$.
\end{theorem}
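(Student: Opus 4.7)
The inclusion $\Lambda_X \subset \mathcal{CNB}(\mathcal{F}_1(X))$ is already known from Section \ref{definitions}; the content of the theorem is the reverse inclusion. Fix $A \in \mathcal{CNB}(\mathcal{F}_1(X))$. By Theorem \ref{arcoconexo}, $A$ has empty interior in $X$ and $X-A$ is arc-connected, so in particular $A\neq X$. My plan is to identify a canonical point $x_A\in A$ and to show $\pi(x_A)=A$.

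The key ingredient is what I will call the \emph{entry point} of $A$: a point $x_A\in A$ such that, for every $q\in X-A$ and every $a\in A$, the first point of $A$ encountered along the unique arc $[q,a]$ (traversed from $q$ towards $a$) is $x_A$. To see $x_A$ is well-defined I use the standard fact that in a dendroid any three points $p_1,p_2,p_3$ admit a median $m(p_1,p_2,p_3)$ lying on each of the arcs $[p_i,p_j]$. Two applications are needed. For fixed $q\in X-A$ and $a_1,a_2\in A$, the median $m(q,a_1,a_2)$ lies on $[a_1,a_2]\subset A$, so the first points of $A$ on $[q,a_1]$ and on $[q,a_2]$ both lie on the common initial sub-arc $[q,m]$ and hence coincide; this shows the entry does not depend on $a$. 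For fixed $a\in A$ and $q_1,q_2\in X-A$, arc-connectedness of $X-A$ together with uniqueness of arcs forces $[q_1,q_2]\subset X-A$, so $m(q_1,q_2,a)\in X-A$, and the first points of $A$ on $[q_1,a]$ and on $[q_2,a]$ both equal the first point of $A$ on the common terminal sub-arc $[m,a]$; this shows the entry does not depend on $q$ either. Combining, one obtains a single point $x_A\in A$.

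With $x_A$ in hand I verify $\pi(x_A)=A$. The inclusion $\pi(x_A)\subset A$ is immediate from $A\in\mathcal{NB}(x_A,\mathcal{F}_1(X))$. For the reverse, let $B\in\mathcal{NB}(\mathcal{F}_1(X))$ with $x_A\in B$, and suppose for contradiction that some $y\in A\setminus B$ exists. By the note preceding Theorem \ref{arcoconexo}, the arc-component $C_y$ of $X-B$ through $y$ is dense in $X$, and so it meets the non-empty open set $X-A$; pick $q\in C_y\cap(X-A)$. Then the unique arc $[y,q]$ lies in $X-B$ (arcs within an arc-component of $X-B$ in a dendroid are precisely the unique arcs of $X$ lying in $X-B$). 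On the other hand, $[y,q]$ joins a point of $A$ with a point of $X-A$, so by the entry-point property $x_A\in[y,q]\subset X-B$, contradicting $x_A\in B$. Therefore $A\subset B$ for every such $B$, whence $A\subset\pi(x_A)$ and equality follows, so $A=\pi(x_A)\in\Lambda_X$.

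The main obstacle is establishing the canonical entry point $x_A$: this is a dendroid-specific phenomenon obtained through the two median arguments, and it is what makes the proof go through in this setting. Once $x_A$ is in hand, the containment $A\subset\pi(x_A)$ is a short application of the density of arc-components of the complement of a nonblocker in a dendroid.
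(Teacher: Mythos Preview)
Your proof is correct and follows essentially the same strategy as the paper's: both identify a ``last point of $A$'' seen from outside and show its $\pi$-value equals $A$. The execution differs in emphasis. The paper fixes one arc $[p,q]$ with $p\in A$, $q\notin A$, takes its last $A$-point $a$, and then, assuming $s\in A\setminus\pi(a)$, builds one arc from $s$ to $q$ avoiding $\pi(a)$ (via Theorem \ref{arcoconexo} and a subcontinuum witnessing that $\pi(a)$ does not block $s$) and asserts a second arc from $s$ to $q$ passing through $a$, yielding a simple closed curve. That second step implicitly uses precisely your first median computation (for fixed $q$, the entry point is independent of the chosen point of $A$). You make this explicit and go further, using the second median computation together with arc-connectedness of $X-A$ to show the entry point is independent of $q$ as well, obtaining a canonical $x_A$. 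Your endgame also differs slightly: rather than producing two arcs, you work directly with an arbitrary nonblocker $B\ni x_A$ and use the density Note to force $x_A$ onto an arc in $X-B$. Your version is a bit more self-contained (the step the paper leaves implicit is spelled out), while the paper's is shorter once one accepts that step.
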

\begin{proof}
It suffices to show that $\mathcal {CNB(F}_1(X)) \subset \Lambda_X$. Let $A \in \mathcal {CNB(F}_1(X))$. Let $p \in A$, $q \in X - A$ and $R$ the arc from $p$ to $q$. By compactness, we can take $a\in R$ such that $aq\cap A=\{a\}$. If $\pi(a) \neq A$, we can take $s \in A - \pi(a)$. Since $\pi(a)$ does not block $s$ there exists a continuum $B$ such that $s \in B$ and $B \cap (X- A) \neq \emptyset$, $B \cap \pi(A)= \emptyset$. By Theorem \ref{arcoconexo}, there exists an arc from $q$ to any point of $B - A$ avoiding $A$. Hence, there is an arc from $s$ to $q$ avoiding $\pi(a)$. On the other hand, we can construct an arc from $s$ to $q$ passing trough $a$. This implies that $X$ contains a closed curve, which is a contradiction. Therefore $\pi(a) =A$.
\end{proof}

In \cite[Example 1.8]{puga} is given an example of a dendroid such that every point is a shore point. Then, we have the following question.

\begin{question}
If $X$ is a dendroid such that $X-\mathcal{S}_1(X)=\emptyset$, is it true that $\Lambda_X = \mathcal {CNB(F}_1(X))$?
\end{question}


\end{document}